\theoremstyle{plain} 
\newtheorem{theo}{\indent\sc Theorem}[section]
\newtheorem{lemm}[theo]{\indent\sc Lemma}
\newtheorem{cor}[theo]{\indent\sc Corollary}
\newtheorem{prop}[theo]{\indent\sc Proposition}
\theoremstyle{definition} 
\newtheorem{defi}[theo]{\indent\sc Definition}
\newtheorem{rem}[theo]{\indent\sc Remark}
\newcommand{\vna}{von Neumann algebra}
\newcommand{\masa}{maximal abelian subalgebra}
\newcommand{\Hs}{Hilbert space}
\newcommand{\AOP}{approximative orthogonality property}
\newcommand{\spa}{subfactor planar algebra}
\newcommand{\R}{\mathbb R}
\newcommand{\C}{\mathbb C}
\newcommand{\N}{\mathbb N}
\newcommand{\h}{\mathcal H}
\newcommand{\B}{\mathbb B}
\newcommand{\lN}{{\ell^2(\mathbb N)}}
\newcommand{\lNN}{{\lN\otimes\lN}}
\newcommand{\deltafrac}[1]{\delta^{\frac{#1}{2}}}
\newcommand{\sh}{ S+S^*}
\newcommand{\U}{\frac{\cup-1}{\deltafrac{1}}}
\newcommand{\cupbul}[1]{\cup^{\bullet #1}}
\newcommand{\tv}{\tilde{v}}
\newcommand{\2}{{[-2;2]}}
\newcommand{\Ldeuxnu}{{L^2(\2,\nu)}}
\newcommand{\LMP}{L^2(M)}
\newcommand{\limo}{\lim_{n\rightarrow \omega}}
\newcommand{\LM}{L^2(M)}
\newcommand{\AM}{A\subset M}
\newcommand{\LA}{L^2(A)}
\newcommand{\Pl}{\mathcal P}
\newcommand{\wPl}{{Gr(\Pl)}}
\begin{document}

\title[The cup subalgebra is maximal amenable]{The cup subalgebra of a II$_1$ factor given by a subfactor planar algebra is maximal amenable} 

\author[A. Brothier]{Arnaud Brothier$^*$} 
\subjclass[2000]{ 
Primary 46L10; Secondary 46K15.
}

\keywords{ 
Planar algebra, Von Neumann algebra, maximal abelian subalgebra, amenability.
}
\thanks{
$^{*}$Supported by ERC Starting Grant VNALG-200749 and by Region Ile de France
}
\address{
Vanderbilt University\endgraf
Department of Mathematics\endgraf
1326 Stevenson Center\endgraf
Nashville\endgraf
TN 37240\endgraf
USA
}
\email{arnaud.brothier@wis.kuleuven.be}
\email{brot@math.jussieu.fr}
\email{arnaud.brothier@gmail.com}


\maketitle

\begin{abstract}
To every subfactor planar algebra was associated a II$_1$ factor with a canonical abelian subalgebra generated by the cup tangle.
Using Popa's approximative orthogonality property, we show that this cup subalgebra is maximal amenable.
\end{abstract}

\section*{Introduction}

The study of \masa s (MASAs) has been initiated by Dixmier \cite{Dixmier_anneaux_max_ab} where he introduced an invariant coming from the normalizer.
Other invariants have been provided later, like the Takesaki equivalence relation \cite{takesaki_invariant_masa}, the length of Tauer \cite{Tauer_masa}, the Pukanszky invariant \cite{pukanszky_invariant} or the $\delta$-invariant \cite{Popa_Singular_masas_in_vna}.

Popa exhibits in \cite{popa_max_inj} an example of a MASA $\AM$ in a II$_1$ factor that is maximal amenable.

This example answers negatively to a question of Kadison that asks if every abelian subalgebra of a II$_1$ factor (with separable predual) is included in a copy of the hyperfinite II$_1$ factor.
We recall that a \vna\ is hyperfinite if and only if it is amenable by the famous theorem of Connes \cite{Connes_76_classification_inj}.
Popa introduced the notion of \AOP\ (in short AOP) and showed that any singular MASA with the AOP is maximal amenable.
Then he proved that the generator MASA in a free group factor is singular and has the AOP.

Using the same scheme of proof, Cameron et al. \cite{Cameron_Fang_Ravichandran_White_10_max_inj} showed that the radial MASA in the free group factor is maximal amenable.
Also Shen \cite{Shen_max_inj_subalg_tensor_prod_free_group_fact},  Jolissaint \cite{Jolissaint_max_inj_and_mixing_masas_group_factors} and Houdayer \cite{Houdayer_12_maxinj} provided other examples of maximal amenable MASAs.

In this paper, we provide maximal amenable MASAs in II$_1$ factors using subfactor planar algebras.
The theory of subfactors has been initiated by Jones \cite{Jones_index_for_subfactors}.
He introduced the standard invariant that has been formalized as a Popa system by Popa \cite{popa_system_construction_subfactor} and as a subfactor planar algebra by Jones \cite{jones_planar_algebra,jones_planar_algebra_II}.
Popa \cite{Popa_Markov_tr_subfactors,popa_system_construction_subfactor,Popa_univ_constr_subfactors} proved that any standard invariant comes from a subfactor. 
Popa and Shlyakhtenko proved \cite{Popa_Shlyakhtenko_univ_prop_LF_subfactor} that the subfactor can be realized in the infinite free group factor $L(\mathbb{F}_\infty)$.
Using planar algebras, random matrix models and free probability, Guionnet et al. \cite{GJS_random_matrices_free_proba_planar_algebra_and_subfactor,JSW_orthogonal_approach_planar_algebra,GJS_semifinite_algebra} showed that any finite depth standard invariant can be realized as a subfactor of an interpolated free group factor. 
Using the same construction, Hartglass \cite{Hartglass_12_GJS} proved that any infinite depth subfactor is realized in $L\mathbb F_\infty$.

The construction of Jones et al. \cite{JSW_orthogonal_approach_planar_algebra}  associated a II$_1$ factor $M$ to a \spa\ $\mathcal P$.
This II$_1$ factor contains a generic MASA $\AM$, see section \ref{subsection_Umaxinj_cup}, that we call the \textit{cup subalgebra}.
The main theorem of this paper is

\begin{theo}\label{main_theo}
For any non trivial \spa\ $\mathcal P$, the cup subalgebra is maximal amenable.
\end{theo}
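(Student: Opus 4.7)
The plan is to apply Popa's criterion from \cite{popa_max_inj}: a singular MASA with the approximative orthogonality property is automatically maximal amenable. Thus the theorem reduces to three separate claims about the cup subalgebra $\AM$: that $A$ is a MASA in $M$, that $A$ is singular in $M$, and that $\AM$ has the \AOP.

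Using the orthogonal picture of $M$ from \cite{JSW_orthogonal_approach_planar_algebra}, I would describe $\LM$ concretely in terms of planar tangles with labels in $\Pl$. In this picture the cup element $\cup$ is well known to have semicircular distribution, so $A = W^*(\cup) \simeq L^\infty(\2,\nu)$ where $\nu$ is the semicircle law, and the Chebyshev-type polynomials $\cupbul{k}$ in $\cup$ yield a natural orthonormal basis of $\LA$. Maximality and singularity of $A$ should then be read off from the action of $\cup$ on the complementary basis of $\LM \ominus \LA$: one expects left and right multiplication by $\cup$ to act essentially as a copy of $\sh$ on $\lN$, making $A$ singular and mixing in $M$ by the same type of argument used for the radial MASA in a free group factor \cite{Cameron_Fang_Ravichandran_White_10_max_inj}.

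The bulk of the work is the AOP. I would first build an orthonormal basis of $\LM \ominus \LA$ consisting of vectors of the form $\cupbul{m}\cdot \xi \cdot \cupbul{n}$, where $\xi$ ranges over a family of \emph{reduced} vectors, coming from tangles in $\Pl$ whose top and bottom strings do not factor through a cup. The next and central step is an explicit planar-diagrammatic computation showing that inner products of the form
\[
\langle \cupbul{m}\, \xi\, \cupbul{n},\; \cupbul{p}\, \eta\, \cupbul{q}\rangle
\]
vanish unless the four exponents satisfy a rigid combinatorial relation forced by matching the top and bottom of the reduced diagrams $\xi$ and $\eta$. Feeding this vanishing into Popa's ultrapower formulation, namely that every $y\in (A^\omega)'\cap M^\omega$ with $E_{A^\omega}(y)=0$ must be orthogonal to every $x\in M \ominus A$, should yield the AOP after a standard truncation and $\limo$ argument.

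The main obstacle is exactly the combinatorial heart of this last step. The geometric intuition, that a reduced tangle padded with powers of $\cup$ on both sides cannot absorb those outer cups when paired against another such tangle, is clear; turning it into a quantitative estimate requires careful bookkeeping of how arbitrary reduced tangles interact with the shift-like action of $\cup$, together with the non-degeneracy of the Markov trace on $\Pl$. Once this lemma is established, the remainder of the argument is formal and follows Popa's scheme almost verbatim, so the non-triviality hypothesis on $\Pl$ should enter only by ensuring that the reduced subspace is nonzero, keeping $A$ strictly smaller than $M$.
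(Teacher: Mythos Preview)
Your overall architecture is the paper's: Popa's criterion reduces the theorem to showing that $\AM$ is a singular MASA with the AOP, and the $V$-decomposition $\LM\ominus\LA \cong \lN\otimes V\otimes\lN$ with $\cup$ acting as $\sh$ on each outer factor is exactly the input used for the MASA and singularity claims. That part is fine.

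The gap is in your treatment of the AOP. First, your formulation is garbled: the AOP does \emph{not} assert that $y\in (A^\omega)'\cap M^\omega$ with $E_{A^\omega}(y)=0$ is orthogonal to every $x\in M\ominus A$; it asserts that $yb\perp by$ in $L^2(M^\omega)$ for every $b\in M\ominus A$. Second, and more seriously, you locate the ``main obstacle'' in the vanishing of inner products $\langle \cupbul{m}\,\xi\,\cupbul{n},\,\cupbul{p}\,\eta\,\cupbul{q}\rangle$, but that vanishing is immediate from the unitarity of the basis map $\Theta$ and is not where the work is. In the paper the genuinely nontrivial step is analytic, not combinatorial: one must show that any $x=(x_n)\in (A^\omega)'\cap M^\omega$ orthogonal to $A^\omega$ is automatically asymptotically orthogonal to the ``boundary'' subspace $Z_{J-1}=\overline{\mathrm{span}}\{\cupbul{l}\bullet v\bullet\cupbul{r}:l\text{ or }r\le J-1\}$. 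This is extracted from the commutation relation $\limo\Vert((\sh)\otimes 1-1\otimes(\sh))\xi_n\Vert=0$ via the identity $q_{e_0}P_i(\sh)=v_i$ and, crucially, the uniform divergence $\sum_{i\le I}P_i(t)^2\to\infty$ on $\2$ (a Dini-type lemma on Chebyshev sums). Without this spectral/ergodic ingredient there is no mechanism by which asymptotic $A$-centrality forces the support of $x_n$ away from small cup-exponents, and the product orthogonality $xb\perp bx$ cannot be concluded. Your sketch does not mention this step, and the ``rigid combinatorial relation'' you propose would not supply it.
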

Note that the construction of \cite{JSW_orthogonal_approach_planar_algebra} has been extended for unshaded planar algebras in \cite{Brot_generalisation_GJSW} and in \cite{Brot_Hartglass_Penneys_12_Rigid_C_tensor_cat}.
In those construction, we have proven that the cup subalgebra is still a MASA. 
It seems very plausible that it is also maximal amenable. 
Note that the cup subalgebra is analogous of the \textit{radial MASA} in a free group factor.
We don't know if for a certain \spa\ those two subalgebras are isomorphic or not.
\section*{Acknowledgments}
I would like to thank the Fondation Sciences Math\'ematiques de Paris which provided me extra support for my stay in UC Berkeley during the spring 2009 when most part of this work was done.
I am happy to thank Melanie MacTavish and Vaughan Jones for making my stay in California very pleasant.

\section{\AOP\ and maximal amenability}\label{section_AOP donne max inj}
We briefly recall Popa's \AOP\ for an abelian subalgebra $\AM$ and how it implies the maximal amenability of $A$, whenever $\AM$ is a singular MASA.
\begin{defi}(see \cite[Lemma 2.1]{popa_max_inj})
Consider a tracial \vna\ $(M,tr)$ and a subalgebra $\AM$.
Let $\omega$ be a free ultrafilter on $\N$.
Then $\AM$ has the \AOP\ (in short AOP) if for any $x\in M^\omega\ominus A^\omega\cap A'$ and any $b\in M\ominus A$ we have $xb\perp bx$ in $L^2(M^\omega)$,
i.e. $\limo tr(x_nbx_n^*b^*)=0$ where $(x_n)_n$ is a representative of $x$.
\end{defi}
\begin{rem}
By polarization, the definition of AOP is equivalent to ask that for any $x_1,x_2\in M^\omega\ominus A^\omega\cap A'$ and any $b_1,b_2\in M\ominus A$ we have $x_1b_1\perp b_2x_2$.
\end{rem}
We recall the fundamental theorem of Popa that is contained in the proof of \cite[Theorem 3.2]{popa_max_inj}.
A more detailed explanation of Popa's theorem has been given in \cite[Lemma 2.2 and Corollary 2.3]{Cameron_Fang_Ravichandran_White_10_max_inj}.
\begin{theo}\label{theo_AOP_donne_maxinj}\cite{popa_max_inj}
Let $\AM$ be a singular MASA with the AOP in a II$_1$ factor $M$.
Then $\AM$ is maximal amenable. 
\end{theo}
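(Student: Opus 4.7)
I follow Popa's strategy from \cite{popa_max_inj} (see also \cite[Lemma~2.2 and Corollary~2.3]{Cameron_Fang_Ravichandran_White_10_max_inj} for a detailed account), and argue by contradiction. Suppose there is an amenable \vna\ $N$ with $A \subsetneq N \subset M$, and fix a nonzero $b = b^* \in N \ominus A$.

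The first stage is to convert the amenability of $N$ into a central element in the ultrapower. Since $N$ is amenable and $A$ remains a MASA in $N$ (inherited from the MASA property of $A$ in $M$), the Connes--Haagerup characterisation of amenability, combined with conditioning onto $A$, yields a positive element $\eta \in A^\omega$ with $\tau^\omega(\eta^2) = 1$ such that $[n,\eta] = 0$ in $L^2(M^\omega)$ for every $n \in N$. In particular $b\eta = \eta b$ as vectors in $L^2(M^\omega)$.

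The second stage is to invoke the AOP. Since $\eta \in A^\omega = A^\omega \cap A'$, the element $\eta$ itself does not satisfy the AOP hypothesis, so one cannot apply AOP directly to $\eta$. The idea is to build from $\eta$ and $b$ a derived element $x \in (A' \cap M^\omega) \ominus A^\omega$ that still commutes with $b$ in $L^2(M^\omega)$ and has positive norm. The AOP then delivers $xb \perp bx$, while the commutation gives $xb = bx$, forcing the common vector to be orthogonal to itself and hence zero; by faithfulness of the trace this collides with the non-triviality of $x$ and $b$, which yields the required contradiction.

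The main obstacle is precisely the production of $x$ in the second stage: the central element delivered by amenability naturally lies inside $A^\omega$, exactly the space that the AOP excludes. It is the singularity of $A$ in $M$ that allows one to perturb $\eta$ into the orthogonal complement of $A^\omega$ within $A' \cap M^\omega$ while preserving the crucial commutation relation with $b$. The careful execution of this step, typically via an analysis of $N$-central states on the basic construction $\langle M, e_A\rangle$ and their transfer to $M^\omega$, is the heart of Popa's original argument and is what forces both the MASA and singularity hypotheses on $A$ to be used in an essential way.
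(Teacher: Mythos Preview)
The paper itself does not prove this theorem; it simply records it as a citation to Popa \cite{popa_max_inj} and to the detailed write-up in \cite[Lemma~2.2, Corollary~2.3]{Cameron_Fang_Ravichandran_White_10_max_inj}. So there is no ``paper's own proof'' to compare against beyond those references, and your plan is already more than the paper provides. That said, as a sketch of the cited argument your plan has two concrete problems.

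First, Stage~1 is not correct as written. An element $\eta\in A^\omega$ with $\tau^\omega(\eta^2)=1$ and $[n,\eta]=0$ for all $n\in N$ is trivially furnished by $\eta=1$; nothing in the Connes--Haagerup characterisation produces a \emph{useful} such $\eta$, and ``conditioning onto $A$'' does not change this. What amenability actually gives is either an $N$-hypertrace on $\langle M,e_A\rangle$ (the route you correctly name only in your last paragraph) or, equivalently, nontrivial central sequences: since $N$ is diffuse and amenable, $N'\cap N^\omega$ is of type~II$_1$, hence not contained in the abelian algebra $A^\omega$. This is what lets one pick a nonzero $x\in (N'\cap N^\omega)\ominus A^\omega\subset (A'\cap M^\omega)\ominus A^\omega$ which, being in $N'\cap N^\omega$, genuinely commutes with $b\in N$. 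The singular MASA hypothesis is then used to control $E_{A^\omega}$ on such elements so that one can in fact take $x$ to be a unitary (or at least arrange the orthogonality needed for AOP).

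Second, your endgame is incomplete: from $xb=bx$ and $xb\perp bx$ you get $xb=0$, but in a II$_1$ ultrapower this alone does not contradict $x\neq 0$ and $b\neq 0$; faithfulness of the trace only gives $xb=0$, not a contradiction. One needs $x$ to be (for instance) a unitary in $M^\omega$, which is exactly what the more careful construction above arranges. In short, the heart of the proof is precisely what you defer: producing a \emph{unitary} $x\in (A'\cap M^\omega)\ominus A^\omega$ commuting with $b$, and this is where both amenability of $N$ and singularity of $A$ enter. Your final paragraph names the right mechanism, but Stages~1--2 as stated do not implement it.
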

\section{Construction of the cup subalgebra}\label{section_constructionGJS_pair}
\subsection{Construction of a II$_1$ factor from a \spa}
Consider a \spa\ $\mathcal P=(\mathcal P_n)_{n\geqslant  0}$ with modulus $\delta>1$.
Let us recall the construction given in \cite{JSW_orthogonal_approach_planar_algebra}.
We assume that the reader is familiar with planar algebras.
For more details on planar algebras, see Jones \cite{jones_planar_algebra,jones_planar_algebra_II} or the introduction of Peters \cite{peters_planar_haagerup_graph}.
Let $Gr(\Pl)$ be the graded vector space equal to the algebraic direct sum $\bigoplus_{n\geqslant 0}\Pl_n$.
We decorate strands in a planar tangle with natural numbers to represent cabling of that strand.
For example 
$$
\begin{tikzpicture}[baseline=.6cm]
\node at (-.2,.55){$k$};
 \draw (0,-.42)--(0,1.4);
 \node at (.5,.5){$=$};
 \end{tikzpicture}
 \begin{array}{c}
\scriptsize{k}\\
 \overbrace{
 \begin{tikzpicture}[baseline=.6cm]
 \draw (1,0)--(1,1.5);
  \draw (1.5,0)--(1.5,1.5); 
  \node at (1.25,.75){$\cdot$};
\end{tikzpicture}
}
\end{array}
$$
An element $a\in\Pl_n$ will be represent as a box:
$$
a=
\begin{tikzpicture}[baseline=.6cm]
\draw(0,1.5)--(2,1.5)--(2,0)--(0,0)--(0,1.5);
\draw (.6,.3)--(.6,1.1)--(1.4,1.1)--(1.4,.3)--(.6,.3);
\draw (1,1.1)--(1,1.5);
\node at (1.2,1.3){\scriptsize$2n$};
\node at (1,.68){$a$};
\end{tikzpicture}\ .
$$
We assume that the distinguished first interval is at the top left of the box.
We consider the inner product $\langle \cdot,\cdot\rangle$ on each $\mathcal P_n$ that is:
$$
\langle a,b\rangle=
\begin{tikzpicture}[baseline=.6cm]
\draw(0,1.4)--(2.8,1.4);
\draw(0,0)--(2.8,0);
\draw(0,0)--(0,1.4);
\draw(2.8,0)--(2.8,1.4);
\draw (1.1,.7)--(1.7,.7);	
	\draw(.3,1.1)--(1.1,1.1)--(1.1,.3)--(.3,.3)--(.3,1.1);
	\draw(1.7,1.1)--(2.5,1.1)--(2.5,.3)--(1.7,.3)--(1.7,1.1);
	\node at (.7,.7) {$a$};
	\node at (2.1,.7) {$b^*$};
         \node at (1.4,.9){\scriptsize$2n$};
\end{tikzpicture}\ ,\ \text{for all}\ a,b\in\mathcal P_n.
$$
We extend this inner product on $Gr(\Pl)$ in such a way that the spaces $\Pl_n$ are pairwise orthogonal.
We still write $\Pl_n$ when it is considered as the $n$-graded part of $Gr(\Pl)$.
Let $\h$ be the Hilbert space equal to the completion of $Gr(\Pl)$ for its prehilbert structure.
Note that $\h$ is the \Hs\ equal to the orthogonal direct sum of the spaces $\Pl_n$.
We define a multiplication on $\wPl$ given by the tangle:
$$
ab=
\sum^{\min(2n,2m)}_{
\substack{
j=0
}}
\begin{tikzpicture}[baseline = .6cm]
 \draw (-1.25,1.7)--(2.5,1.7);
  \draw (-1.25,-.2)--(2.5,-.2);
  \draw (-1.25,-.2)--(-1.25,1.7);
    \draw (2.5,-.2)--(2.5,1.7);
\draw (-.2,.8)--(-.2,1.7);	
	\draw (1.4,.8)--(1.4,1.7);
	\draw (.2,.8) arc (180:0:.4);	
\draw (-.4,.8)--(.4,.8)--(.4,-0)--(-.4,-0)--(-.4,.8);
	\draw(.8,.8)--(1.6,.8)--(1.6,-0)--(.8,-0)--(.8,.8);
	\node at (0,.4) {$a$};
	\node at (1.2,.4) {$b$};
	\node at (-.65,1.2) {{\scriptsize{$2n-j$}}};
	\node at (1.90,1.2) {{\scriptsize{$2m-j$}}};
	\node at (.6,1.4) {{\scriptsize{$j$}}};
\end{tikzpicture}\ ,\ \text{for all}\ a\in\mathcal P_n,\ b\in\mathcal P_m.
$$
For a fix $a\in Gr(\Pl)$, the map $b\in \wPl\longmapsto ab\in\wPl$ is bounded for the inner product $\langle\cdot,\cdot\rangle$.
This gives us a representation of the $*$-algebra $\wPl$ on $\h$.
We denote by $M$ the \vna\, equal to the bicommutant of this representation.
It is a II$_1$ factor by \cite{JSW_orthogonal_approach_planar_algebra}.
We identify the graded algebra $\wPl$ and its image in the \vna\ $M$.
The unique faithful normal trace $tr$ of $M$ is the one coming from the planar algebra structure of $\Pl$.
It is equal to the formula $tr(a)=\langle a,1\rangle$, where $1$ is the unity of $Gr(\Pl)$.
Let $\LMP$ be the \Hs\ coming from the GNS construction over the trace $tr$.
Note that the standard representation of the \vna\ $M$ on the \Hs\ $\LMP$ is conjugate to the action of $M$ on the \Hs\ $\h$.
We will identify those two representations.
Also, we identify $M$ with its image in $\LMP$.
The left and right action of $M$ on the Hilbert space $L^2(M)$ are denote by $\pi$ and $\rho$,
i.e. $\pi(x)\rho(y)z=xzy,$ for $x,y,z\in M$.
The norm of $M$ (resp. $\LM$) is denoted by $\Vert\cdot\Vert$ (resp. $\Vert\cdot\Vert_2$). 
It the context is sufficiently clear, we would denote the norm of $\LM$ by $\Vert\cdot\Vert$.
We define a multiplication on $\wPl$ by requiring that if $a\in\Pl_n$ and $b\in\Pl_m$, then $a\bullet b\in\Pl_{n+m}$ is given by
$$
a\bullet b=
\begin{tikzpicture}[baseline=.6cm]
\draw (0,1.5)--(2.8,1.5)--(2.8,0)--(0,0)--(0,1.5);
\draw(.3,1.1)--(1.1,1.1)--(1.1,.3)--(.3,.3)--(.3,1.1);
	\draw(1.7,1.1)--(2.5,1.1)--(2.5,.3)--(1.7,.3)--(1.7,1.1);
	\node at (.7,.7) {$a$};
	\node at (2.1,.7) {$b$};
\draw (.7,1.1)--(.7,1.5);
\draw (2.1,1.1)--(2.1,1.5);
\node at (.9,1.3){\scriptsize$2n$};
\node at (2.35,1.3){\scriptsize$2m$};
\end{tikzpicture}\ .
$$
We remark that $\Vert a\bullet b\Vert_2=\Vert a\Vert_2\Vert b\Vert_2$, for all $a\in \Pl_n$ and $b\in\Pl_m$.
By the triangle inequality, the bilinear function
\begin{align*}
Gr(\Pl)\times Gr(\Pl)&\longrightarrow Gr(\Pl)\\
(a,b)&\longmapsto a\bullet b
\end{align*}
is continuous for the norm $\Vert\cdot\Vert_2$.
We extend this operation on $L^2(M)\times L^2(M)$ and still denote it by $\bullet$.
\subsection{The cup subalgebra}\label{subsection_Umaxinj_cup}
The cup subalgebra $A\subset M$ is the abelian \vna\, generated by the self-adjoint element cup:
$$
\begin{tikzpicture}[baseline=.6cm]
\draw(0,1.5)--(2,1.5)--(2,0)--(0,0)--(0,1.5);
\draw (.3,1.5) arc (-180:0:.7);
\end{tikzpicture}\ .
$$
We denote cup by the symbol $\cup$.
Also we use the following notation
$$
\cup^{\bullet k}=
\begin{array}{c}
k\ \text{cups}\\
\overbrace{
\begin{tikzpicture}[baseline=.6cm]
\draw(0,1.5)--(3,1.5);
\draw(0,0)--(3,0);
\draw(0,0)--(0,1.5);
\draw(3,0)--(3,1.5);
\draw (.2,1.5) arc (-180:0:.4);
\draw(2,1.5) arc (-180:0:.4);
\node at (1.5,1.3) {$\cdots$};
\end{tikzpicture}
}
\end{array}\ .
$$
We use the convention that $0=\cupbul{k}$ for $k<0$ and $1=\cupbul{0}$. 
Let $n\geqslant 1$ and $V_n$ be the subspace of $\Pl_n$ of elements which vanish when a cap is placed at the top right and vanish when a cap is placed at the top left, i.e.
$$V_n=\left\{
a\in \mathcal P_n,\
\begin{tikzpicture}[baseline=.6cm]
\draw (0,0)--(0,1.5)--(2.4,1.5)--(2.4,0)--(0,0);
\draw (.2,.2)--(.2,1)--(1.8,1)--(1.8,.2)--(.2,.2);
\draw (1.4,1)--(1.4,1.5);
\draw (.4,1) arc (180:0:.4);	
\node at (1.9, 1.2){\scriptsize{$2n-2$}};
\node at (1,.6){$a$};
\node at (2.7,.6){$=$};
\draw (3,0)--(3,1.5)--(5.4,1.5)--(5.4,0)--(3,0);
\draw (3.6,.2)--(3.6,1)--(5.2,1)--(5.2,.2)--(3.6,.2);
\draw (4.2,1) arc (180:0:.4);
\draw (4,1)--(4,1.5);
\node at (3.5, 1.2){\scriptsize{$2n-2$}};
\node at (4.4,.6){$a$};
\end{tikzpicture}=0
\right\}\ .
$$
We denote by $V$ the orthogonal direct sum of the $V_n$, i.e. $$V=\bigoplus_{n=1}^\infty V_n.$$
Let $\lN$ be the separable Hilbert space with orthonormal basis $\{e_n,\ n\geqslant 0\}$ and $S\in\B(\lN)$ the unilateral shift operator.
\begin{prop}\label{prop_Umaxinj_bimodule_structure_LM}
The map
$$\begin{array}{cccccc}
 \Theta: & L^2(M) & \longrightarrow & \lN & \oplus & (\lN\otimes V\otimes \lN) \\
   & \delta^{-\frac{k}{2}} \cupbul k & \longmapsto & e_k&\oplus  & 0 \\
   & \delta^{-\frac{l+r}{2}} \cupbul l\bullet v\bullet \cupbul r & \longmapsto & 0 & \oplus &  e_l\otimes v\otimes e_r
\end{array}$$ defines a unitary transformation, where $k,l,r\geqslant 0$, $v\in V$ and $\delta$ is the modulus of the planar algebra.
We have that
$$\Theta\pi(\U)\Theta^*=\left( \begin{array}{cc}
\sh-q_{e_0}&0\\
0&(S+S^*)\otimes 1_{V}\otimes 1_\lN
\end{array}\right)$$
and
$$\Theta\rho(\U)\Theta^*=\left( \begin{array}{cc}
\sh-q_{e_0}&0\\
0&1_\lN\otimes 1_{V}\otimes (\sh)
\end{array}\right),$$
where $q_{e_0}$ is the rank one projection on $\C e_0$ and $1_V,1_\lN$ are the identity operators of the Hilbert spaces $V$ and $\lN$.
\end{prop}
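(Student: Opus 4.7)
The plan is to first show that the listed vectors form an orthonormal basis of $\LM$, so that $\Theta$ extends to a unitary, and then to identify the two actions by direct computation in the graded $*$-algebra $\wPl$.

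For orthonormality, I would use the planar inner product. Pairing $\cup^{\bullet l}\bullet v\bullet\cup^{\bullet r}$ with $\cup^{\bullet l'}\bullet v'\bullet\cup^{\bullet r'}$ produces a closed tangle in which, when $(l,r)\neq(l',r')$, the stacking forces a cap to be attached on one of the extreme sides of either $v$ or $v'$; the defining vanishing property of $V$ then forces the value to be zero. When $(l,r)=(l',r')$, the outer cups pair into $l+r$ closed loops contributing $\delta^{l+r}$, which is exactly cancelled by the normalisation $\delta^{-(l+r)/2}$, leaving $\langle v,v'\rangle$. Orthogonality between pure-cup vectors and $V$-decorated ones follows by the same cap-cancellation mechanism (or directly by grading). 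For completeness, the key structural fact is the orthogonal decomposition
\[
\Pl_n \;=\; \mathbb C\cdot\cup^{\bullet n}\;\oplus\bigoplus_{\substack{l,r\geqslant 0 \\ l+r<n}}\cup^{\bullet l}\bullet V_{n-l-r}\bullet\cup^{\bullet r},
\]
which I would establish by induction on $n$, using the identity ``cap$\,\circ\,$cup$\,=\,\delta$'' to peel off the maximal number of trailing cups from each side, together with the definition of $V_k$ as the common kernel of the two extreme capping maps.

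To identify $\pi(\U)$ I would expand $\cup\cdot(\cup^{\bullet l}\bullet v\bullet\cup^{\bullet r})$ via the graded multiplication formula, which is a finite sum over $j=0,1,2$ of contractions with the two leftmost strands of the argument. The $j=0$ term contributes $\cup^{\bullet(l+1)}\bullet v\bullet\cup^{\bullet r}$. The $j=2$ term absorbs the leftmost cup of $\cup^{\bullet l}$ into a closed loop of value $\delta$, giving $\delta\cdot\cup^{\bullet(l-1)}\bullet v\bullet\cup^{\bullet r}$ when $l\geqslant 1$, and vanishing when $l=0$ (with a $V$-factor present) by the left-cap property of $V$. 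The $j=1$ term is planar-isotopic to the input tangle and therefore reproduces $\cup^{\bullet l}\bullet v\bullet\cup^{\bullet r}$ whenever such a contraction exists. Cancelling this reproduced term against $-1\cdot(\cup^{\bullet l}\bullet v\bullet\cup^{\bullet r})$ in $\U$ and normalising by $\delta^{-1/2}$ yields the Jacobi recursion $e_l\mapsto e_{l+1}+e_{l-1}$ on the left $\lN$ factor, which is $\sh$. The boundary correction $-q_{e_0}$ on the pure-cup block corresponds to the one exceptional case $l=r=0$ without a $V$-factor: here $\cup\cdot 1=\cup$ has no $j\geqslant 1$ contractions, so the $-1$ in $\U$ is not absorbed and produces the rank-one defect at $e_0$. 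The right action $\rho(\U)$ is handled symmetrically by contracting with the two rightmost strands of the argument, the same $V$-property now cutting off the cap from the right side.

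The main obstacle is the orthogonal direct-sum decomposition of $\Pl_n$: orthogonality of the summands follows immediately from the inner-product computation and the $V$-property, but the spanning statement requires a careful induction in which one tracks how capping and uncapping interact with the $V$-subspaces. Once that decomposition is in place, the identification of the two actions is a routine, if somewhat intricate, diagrammatic bookkeeping with the three tangles $j=0,1,2$ and planar isotopy.
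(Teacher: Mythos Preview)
Your argument is correct and is essentially the natural direct proof of the statement. Note, however, that the paper does not prove this proposition at all: its entire proof is the citation ``See \cite{JSW_orthogonal_approach_planar_algebra}[theorem 4.9].'' So there is nothing to compare at the level of the present paper; what you have written is a faithful sketch of the argument in the cited reference. Your treatment of the three terms $j=0,1,2$ in the graded product, the isotopy identifying the $j=1$ term with the input (available whenever the input has degree $\geqslant 1$), the use of the $V$-property to kill the $j=2$ term at the boundary $l=0$, and the residual $-q_{e_0}$ coming from the unique case $b=1$ where no $j\geqslant 1$ contraction exists, are all handled correctly. The spanning part of the decomposition of $\Pl_n$ is indeed the only point requiring real care, and your inductive plan (peel off extreme cups until both extreme cappings vanish) is the standard one.
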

\begin{proof}
See \cite{JSW_orthogonal_approach_planar_algebra}[theorem 4.9.].
\end{proof}
\begin{cor}\label{cor_Umaxinj_cup_sing_MASA}
The cup subalgebra is a singular MASA.
\end{cor}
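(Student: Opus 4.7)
The plan is to derive both the MASA and singularity properties directly from the explicit bimodule picture in Proposition~\ref{prop_Umaxinj_bimodule_structure_LM}. The one nontrivial analytic input is that the self-adjoint Jacobi operator $\sh$ on $\lN$ has purely absolutely continuous spectrum on $\2$, with spectral measure $\nu$ the (non-atomic) Chebyshev measure of the second kind.

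For the MASA inclusion $A'\cap M\subset A$ (the reverse is free since $A$ is abelian), I compute $\ker(\pi(\U)-\rho(\U))\subset L^2(M)$; any $x\in A'\cap M$ sends the cyclic vector into this kernel. Via $\Theta$, the two operators coincide on the first summand $\lN$ (both equal $\sh-q_{e_0}$), while on the second summand $\lN\otimes V\otimes\lN$ their difference is $T\otimes 1_V$ with $T=(\sh)\otimes 1-1\otimes(\sh)$. Spectrally diagonalizing $\sh$, the operator $T$ becomes multiplication by $(x-y)$ on $L^2(\2^2,d\nu\otimes d\nu)$; its kernel is trivial because the diagonal is $\nu\otimes\nu$-null. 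Hence $\ker(\pi(\U)-\rho(\U))=\Theta^{-1}(\lN)=L^2(A)$, which forces $x\in L^2(A)\cap M=A$.

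For singularity, take $u\in\mathcal{U}(M)$ with $uAu^*=A$ and set $\sigma=\mathrm{Ad}(u)|_A\in\mathrm{Aut}(A)$. Decompose $u=u_0+u_1$ with $u_0=E_A(u)$ and $u_1\perp L^2(A)$; since $L^2(M)\ominus L^2(A)$ is invariant under both left and right $A$-actions, the relation $ua=\sigma(a)u$ restricts to $\rho(a)u_1=\pi(\sigma(a))u_1$ on that complement. Specializing to $a=\U$ and writing $\sigma(\U)=f(\U)$ for some bounded Borel $f$, Proposition~\ref{prop_Umaxinj_bimodule_structure_LM} transports the equation to
$$[1\otimes 1_V\otimes (\sh)]\,u_1=[f(\sh)\otimes 1_V\otimes 1]\,u_1$$
on the second summand. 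The spectral decomposition of $\sh$ then forces $u_1$, viewed as a $V$-valued $L^2$-function on $\2^2$, to be supported on the graph $\{(x,y):y=f(x)\}$. By Fubini and non-atomicity of $\nu$ that graph has $\nu\otimes\nu$-measure zero, so $u_1=0$, $u\in A$, and $N_M(A)''=A$.

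The only delicate step is this non-atomicity of the Chebyshev measure, invoked both to kill $\ker T$ and the graph of $f$; beyond that the argument is a direct reading of Proposition~\ref{prop_Umaxinj_bimodule_structure_LM}, requiring no further planar algebra input.
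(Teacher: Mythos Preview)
Your proof is correct and rests on the same structural input as the paper's, namely Proposition~\ref{prop_Umaxinj_bimodule_structure_LM}. The paper packages the argument in bimodule language: it observes that $L^2(M)\ominus L^2(A)$ is a direct sum of copies of the coarse bimodule $L^2(A)\otimes L^2(A)$, quotes \cite{JSW_orthogonal_approach_planar_algebra} for the MASA conclusion, and for singularity invokes the general principle that a cyclic $A$--bimodule generated by a normalizing unitary embeds in the coarse bimodule only when the induced automorphism is trivial (whence $u\in A'\cap M=A$). Your argument unpacks exactly this principle: the kernel computation for $\pi(\U)-\rho(\U)$ is precisely why the coarse bimodule has no nonzero central vectors, and your graph--of--$f$ argument is the concrete reason no normalizing vector can live inside $\bigoplus L^2(A)\otimes L^2(A)$. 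The key analytic fact in both cases is the non-atomicity of the semicircular measure $\nu$, which you correctly isolate. So the two proofs are the same in substance; yours is more self-contained, the paper's more conceptual, and neither requires anything beyond Proposition~\ref{prop_Umaxinj_bimodule_structure_LM}.
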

\begin{proof}
The $A$-bimodule $\LM\ominus\LA$ is isomorphic to an infinite direct sum of the coarse bimodule $\LA\otimes\LA$.
This implies that $\AM$ is maximal abelian. 
See \cite{JSW_orthogonal_approach_planar_algebra} for more details.
Suppose that there exists a unitary $u$ in the normalizer of $A$ inside $M$ which is orthogonal to $A$. 
It generates a $A$-subbimodule 
\begin{equation}\label{equa:cor_singular}
\mathcal K\subset \bigoplus_{j=0}^\infty \LA\otimes \LA.
\end{equation}
We have the inclusion \ref{equa:cor_singular} if and only if the automorphism $a\in A\mapsto uau^*$ is trivial.
This implies that $u\in A'\cap M$. Hence $u\in A$, a contradiction.
Therefore, $\AM$ is singular.
\end{proof}
\subsection{Basic facts on the unilateral shift operator}\label{section_shift}
Consider the semi-circular measure 
$$d\nu(t)=\frac{\sqrt{4-t^2}}{2\pi}dt$$ defined on the interval $\2$.
Let $P_i\in\R[X]$ be the family of polynomials such that $P_0(X)=1,\ P_1(X)=X$ and $P_i(X)=XP_{i-1}(X)-P_{i-2}(X)$ for all $i\geqslant 2$.
By \cite[example 3.4.2]{Voiculescu_dykema_nica_Free_random_variables}, we have that the map
\begin{align*}
\Psi:\lN&\longrightarrow\Ldeuxnu\\
e_i&\longmapsto P_i
\end{align*} 
defines a unitary transformation.
Furthermore, for any continuous function $f\in\mathcal C(\2)$ we have that $(\Psi^*f(\sh)\Psi)(t)=tf(t)$, for almost every $t\in\2$.

\begin{lemm}\label{lem_sum_Pi_div}
Consider $I\geqslant 0$ and the function $R_I:\2\longrightarrow \R$ such that $R_I(t)=\sum_{i=0}^I P_i(t)^2$.
The sequence $(R_I)_{I\geqslant 0}$ converges uniformly to $+\infty$.
\end{lemm}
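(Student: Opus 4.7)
The plan is to identify the polynomials $P_i$ with a rescaling of the Chebyshev polynomials of the second kind on $\2$, use this identification to verify that $R_I(t)\to +\infty$ at every single $t\in\2$, and then upgrade pointwise divergence to uniform divergence through a Dini-type compactness argument that exploits the obvious monotonicity $R_I\geqslant R_{I-1}$.

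The first step is the Chebyshev identification. An induction on $i$ using the recursion $P_i=XP_{i-1}-P_{i-2}$ together with the trigonometric identity $2\cos\theta\,\sin(i\theta)=\sin((i+1)\theta)+\sin((i-1)\theta)$ yields
\[
P_i(2\cos\theta)=\frac{\sin((i+1)\theta)}{\sin\theta}\quad\text{for all } \theta\in(0,\pi),\qquad P_i(\pm 2)=(\pm 1)^i(i+1).
\]
In particular each $P_i$ is a real polynomial of degree $i$, so $R_I=\sum_{i=0}^I P_i^2$ is a non-negative continuous function on $\2$.

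The second step is pointwise divergence. At the endpoints, $R_I(\pm 2)=\sum_{i=0}^I (i+1)^2\to+\infty$. For $t=2\cos\theta$ with $\theta\in(0,\pi)$, the identification gives
\[
R_I(2\cos\theta)=\frac{1}{\sin^2\theta}\sum_{j=1}^{I+1}\sin^2(j\theta)=\frac{1}{2\sin^2\theta}\left((I+1)-\sum_{j=1}^{I+1}\cos(2j\theta)\right),
\]
and the elementary estimate $\left|\sum_{j=1}^{N}\cos(2j\theta)\right|\leqslant 1/|\sin\theta|$ uniform in $N$ (obtained by summing the geometric series $\sum_{j=1}^N e^{2ij\theta}$) shows that the leading term $(I+1)/(2\sin^2\theta)$ dominates. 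Hence $R_I(2\cos\theta)\to+\infty$ for each $\theta\in(0,\pi)$.

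The third step is the uniform upgrade. Because $R_I=R_{I-1}+P_I^2\geqslant R_{I-1}$, the sequence of continuous functions $(R_I)_I$ is pointwise monotone on the compact set $\2$. For every $M>0$ the open sets $U_I=\{t\in\2:R_I(t)>M\}$ are therefore nested and, by the second step, cover $\2$; compactness together with nestedness then forces $U_{I_0}=\2$ for some $I_0$, so $R_I(t)>M$ for every $I\geqslant I_0$ and every $t\in\2$, which is exactly the uniform statement. I expect the main obstacle to be the pointwise estimate at interior points $t\in(-2,2)$: without the Chebyshev identification there is no obvious reason that the partial sums of $P_i(t)^2$ should diverge at every single $t$. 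Once that identification is in place, the remainder of the argument reduces to a standard $\cos$-summation bound followed by a Dini-type observation.
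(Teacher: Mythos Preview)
Your proof is correct. Both your argument and the paper's share the same overall architecture: establish pointwise divergence of $R_I$ on $\2$, then invoke Dini's theorem (monotone continuous functions on a compact set) to upgrade to uniform divergence. The difference lies entirely in the pointwise step. The paper argues by contradiction without ever identifying the $P_i$: if $\sum_i P_i(t_0)^2<\infty$ then $\varepsilon_i:=P_i(t_0)\to 0$, and the paper observes that the only sequence satisfying the linear recursion $\varepsilon_{i+1}=t_0\varepsilon_i-\varepsilon_{i-1}$ and tending to $0$ is the zero sequence, contradicting $\varepsilon_0=1$. Your route is more explicit: you recognise $P_i$ as the rescaled Chebyshev polynomials of the second kind, write $R_I(2\cos\theta)$ in closed form, and bound the trigonometric sum directly. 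The paper's approach is softer and avoids any special-function knowledge, but leaves the assertion about the recursion somewhat terse; your approach requires the Chebyshev identification but in return yields a quantitative lower bound $R_I(2\cos\theta)\geqslant \bigl((I+1)-|\sin\theta|^{-1}\bigr)/(2\sin^2\theta)$, which is more than the lemma needs but makes the divergence transparent.
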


\begin{proof}
Let us prove the simple convergence to $+\infty$.
Suppose there exists $t_0\in\2$ such that the sequence $(R_I(t_0))_k$ does not converge to $+\infty$.
The polynomiasl $P_i$ have real coefficient. 
Hence, for any $t\in\2$, $P_i(t)$ is real; thus, $(R_I(t_0))_k$ is an increasing sequence in $\R$.
If this sequence does not diverge, then it is bounded.
Then, the sequence $(P_i(t_0))_i$ is square summable.
In particular we have $\lim_{i\rightarrow \infty} P_i(t_0)=0.$
We put $\varepsilon_i=P_i(t_0)$.
We have that $\varepsilon_{i+1}=t_0\varepsilon_i-\varepsilon_{i-1}$ and $\lim_{i\rightarrow \infty}\varepsilon_i=0.$
There is only one sequence that satisfies those axioms and it is the sequence equal to zero.
Since $0\neq 1=P_0(t_0)=\varepsilon_0$, we arrive to a contradiction and thus, $\lim_{I\rightarrow \infty}S_I(t)=+\infty$ for any $t\in\2$.
To conclude we use the following well known result due to Dini:
Let $(f_I)_I$ be a sequence of continuous functions from a compact topological space $K$ to $\R$ such that $f_I\leqslant f_{I+1}$.
If for any $t\in K$, $\lim_{I\rightarrow \infty} f_I(t)=+\infty,$ then the sequence $(f_I)_I$ converges uniformly to $+\infty$.
\end{proof}
\subsection{Proof of Theorem \ref{main_theo}}
According to the Theorem \ref{theo_AOP_donne_maxinj} and Corollary \ref{cor_Umaxinj_cup_sing_MASA} it is sufficient to show that the cup subalgebra has the AOP.
Fix $x\in M^\omega\ominus A^\omega\cap A'$ and $b\in M\ominus A$.
Let us show that $xb\perp bx$.
By the Kaplansky density theorem we can assume that their exists $J\geqslant 1$ such that $b\in\bigoplus_{j=0}^J\mathcal P_j$.
Suppose that $\Vert x\Vert\leqslant 1$ and fix a sequence $x_n\in M$ which is a representative of $x$ such that $x_n\in M\ominus A$ and $\Vert x_n\Vert\leqslant 1$ for all $n\geqslant 0$.

Consider the closed subspaces of $\LM$:
\begin{align*}
       Y_L&=\overline{\text{span}}\{\cupbul l\bullet v\bullet \cupbul r,\ l,r\leqslant L,\ v\in V\}\ \text{and}\\
    Z_L&=\overline{\text{span}}\{ \cupbul l \bullet v \bullet \cupbul r,\ l\text{ or }r\leqslant L,\,v\in V\},
\end{align*}
for all $L\geqslant 0$.
Remark that $b$ is in $Y_{J-1}$.

We claim that for any $z\in M$ which is orthogonal to $A$ and $Z_{J-1}$ we have
 \begin{equation}\label{equa_zaperpby}
 zb\perp bz.
 \end{equation}
The element $z$ is a weak limit of finite linear combinations of $\cupbul i\bullet v\bullet \cupbul j,$ where $i,j\geqslant J$ and $v\in V$.
The element $b$ is a finite linear combination of $\cupbul k\bullet \tilde v\bullet \cupbul r,$ where $k,r\leqslant J-1$ and $\tilde v\in V$.
We have
\begin{align*}
(\cupbul i\bullet v\bullet \cupbul j)(\cupbul k\bullet \tv\bullet \cupbul r) = &(\cupbul i\bullet v\bullet \cupbul{j+k}\bullet \tv\bullet\cupbul r)+(\cupbul i\bullet v\bullet \cupbul{j+k-1}\bullet\tv\bullet\cupbul r)+\cdots \\
 & +\delta^k (\cupbul i\bullet v\bullet \cupbul{j-k}\bullet\tv\bullet \cupbul r)+\delta^k (\cupbul i\bullet v\bullet \cupbul{j-k-1}\bullet\tv\bullet\cupbul r),
\end{align*}
for any $i,j\geqslant J$ and $k,r\leqslant J-1$.
It is easy to see that $v\bullet\cupbul n\bullet\tilde v$ is an element of $V$ for any $n$.
Hence, the product  $(\cupbul i\bullet v\bullet \cupbul j)(\cupbul k\bullet \tv\bullet \cupbul r)$ is in the vector space
$$\overline{\text{span}}\{\cupbul l\bullet w\bullet \cupbul r, \ l\geqslant J,\ w\in V,\ r\leqslant J-1\}$$
and so does $zb$.
A similar computation shows that $bz$ is in the closed vector space
$$\overline{\text{span}}\{\cupbul l\bullet v\bullet \cupbul r,\  l\leqslant J-1,\ w\in V,\ r\geqslant J \}.$$
Therefore, we have $zb\perp bz$. 
This proves \ref{equa_zaperpby}.
Hence, if we show that $x$ is in the orthogonal of $Z_{J-1}^\omega$ then we would have proven that $xb$ is orthogonal to $bx$.
Consider $Q_J:\LM\longrightarrow Z_{J-1}$, the orthogonal projection of range $Z_{J-1}$. 
We remark that 
$$\Theta Q_J\Theta^*=\bigoplus_{j=0}^{J-1}((q_{e_j}\otimes 1_V\otimes 1_\lN)\oplus (1_\lN\otimes1_V\otimes q_{e_j})),$$
where $\Theta$ is the unitary transformation defined in Section \ref{section_shift} and $1_V,1_\lN$ are the identity operators of $V$ and $\lN$.
By symmetry, it is sufficient to show that 
\begin{equation}\label{equa_qitime1time1}
\limo \Vert(q_{e_j}\otimes 1_V\otimes 1_\lN) \xi_n\Vert=0,\ \text{for any}\ j\geqslant 0,
\end{equation}
where $\xi_n:=\Theta(x_n)$.
We know that $x\in M^\omega\cap A'$. 
Hence by conjugation by $\Theta$ we obtain the equation
\begin{equation}\label{equa_shift_xi_n}
 \limo\Vert ((\sh)\otimes 1_V\otimes 1_\lN -1_\lN\otimes 1_V\otimes(\sh))\xi_n\Vert=0.
 \end{equation}
We will show that \ref{equa_shift_xi_n} implies \ref{equa_qitime1time1}.

All the operators involved in our context act trivially on the factor $V$. 
For simplicity of the notations we stop writing the extra "$\otimes 1_V\otimes$" in the formula and denote the identity operator $1_\lN$ by $1$.
Therefore, we assume that $\xi_n$ is a vector of $\lNN$. 
The equation \ref{equa_qitime1time1} and \ref{equa_shift_xi_n} becomes
\begin{equation}\label{equa_qitime1time1deux}
\limo \Vert(q_{e_i}\otimes 1) \xi_n\Vert=0,\ \text{for any}\ i\geqslant 0\ \text{and}
\end{equation}
\begin{equation}\label{equa_shift_xi_ndeux}
 \limo\Vert ((\sh)\otimes 1-1\otimes(\sh))\xi_n\Vert=0.
 \end{equation}
Consider the partial isometry $v_i\in\B(\lN)$ such that $v_i^*v_i=q_{e_i}$ and $v_iv_i^*=q_{e_0}$.
We claim that for all $i\geqslant 0$ we have 
\begin{equation}\label{equa_qe0Pi-qei}
\limo \Vert ((v_i\otimes 1)-(q_{e_0}\otimes P_i(\sh)))\xi_n\Vert=0,
\end{equation}
where $\{P_i\}_i$ is the family of polynomials defined in Section \ref{section_shift}.
Remark that for all $k\geqslant 2$ we have
\begin{equation*}
(\sh)^k\otimes 1-1\otimes (\sh)^k=((\sh)\otimes 1-1\otimes (\sh))\circ( \sum_{j=0}^{k-1}(\sh)^j\otimes (\sh)^{k-1-j}).
\end{equation*}
Therefore, the equation \ref{equa_shift_xi_ndeux} implies that
$$\limo\Vert(P(\sh)\otimes 1-1\otimes P(\sh))\xi_n\Vert=0,\ \text{for all polynomials}\ P.$$
In particular, 
$$\limo\Vert(P_i(\sh)\otimes 1-1\otimes P_i(\sh))\xi_n\Vert=0,\ \text{for all}\ i\geqslant 0.$$
Note that
$$P_i(\sh)(e_0)=e_i,\ \text{for all}\ i\geqslant 0.$$
Furthermore, $P_i$ has real coefficient. 
Therefore, the operator $P_i(\sh)$ is self-adjoint.
We have
\begin{align*}
\langle q_{e_0}\circ P_i(\sh)e_l,e_r\rangle&=\langle P_i(\sh)e_l,q_{e_0}e_r\rangle=\delta_{r,0}\langle P_i(\sh)e_l,e_0\rangle\\
&=\delta_{r,0}\langle e_l,P_i(\sh)e_0\rangle=\delta_{r,0}\delta_{l,i},
\end{align*}
where $i,l,r\geqslant 0$ and $\delta_{n,m}$ is the Kronecker symbol.
This shows that $q_{e_0}\circ P_i(\sh)=v_i,$ for all $i\geqslant 0$.
We have
$$\limo\Vert(q_{e_0}\otimes 1)\circ(P_i(\sh)\otimes 1-1\otimes P_i(\sh))\xi_n\Vert=0.$$
Therefore, we have $$\limo\Vert(v_i\otimes 1-q_{e_0}\otimes P_i(\sh))\xi_n\Vert=0.$$
This proves the claim. We have
$$\limo\Vert(q_{e_i}\otimes 1-v_i^*q_{e_0}\otimes P_i(S+S^*))\xi_n\Vert=0.$$
This means that 
$$\limo\Vert(q_{e_i}\otimes 1)\xi_n-(v_i^*\otimes P_i(\sh))\circ (q_{e_0}\otimes 1)\xi_n\Vert=0.$$
Hence, we have
\begin{align*}
\limo\Vert (q_{e_i}\otimes 1) \xi_n\Vert&\leqslant \limo\Vert (v_i^*\otimes P_i(\sh))\circ (q_{e_0}\otimes 1)\xi_n\Vert\\
& \leqslant \Vert v_i^*\otimes P_i(\sh)\Vert \limo\Vert (q_{e_0}\otimes 1) \xi_n\Vert.
\end{align*}
Therefore, to prove \ref{equa_qitime1time1deux} it is sufficient to show that
\begin{equation*}
\limo \Vert(q_{e_0}\otimes 1)\xi_n\Vert=0.
\end{equation*}

Let us fix $\varepsilon>0$, we have to find an element of the ultrafilter $E\in\omega$ such that for any $n\in E$,
$\Vert (q_{e_0}\otimes 1)\xi_n\Vert<\varepsilon.$
By the triangle inequality, we have
$$\Vert (q_{e_0}\otimes P_i(\sh))\xi_n\Vert\leqslant \Vert (q_{e_0}\otimes P_i(\sh))\xi_n-(v_i\otimes 1)\xi_n\Vert+\Vert (v_i\otimes 1)\xi_n\Vert,$$
for all $i\geqslant 0$.
We have $\Vert (v_i\otimes 1)\xi_n\Vert \leqslant \Vert\xi_n\Vert \leqslant 1$; thus,
\begin{align}\label{equa_align_qeixin}
\Vert (v_i\otimes 1)\xi_n\Vert^2\geqslant &\Vert (q_{e_0}\otimes P_i(\sh))\xi_n\Vert^2 \\
&-\Vert (q_{e_0}\otimes P_i(\sh))\xi_n-(v_i\otimes 1)\xi_n\Vert^2\notag \\
&-2\Vert (q_{e_0}\otimes P_i(\sh))\xi_n-(v_i\otimes 1)\xi_n\Vert.\notag
\end{align}
By Lemma \ref{lem_sum_Pi_div}, there exists an integer $I\in\N$ such that
$$\inf_{t\in\2}S_I(t)>\frac{2}{\varepsilon}.$$

We have
\begin{align}\label{equa_sumqeoPi}
\sum_{i=0}^I\Vert (q_{e_0}\otimes P_i(\sh))\xi_n\Vert^2&=\sum_{i=0}^I\Vert (1\otimes P_i(\sh))\circ(q_{e_0}\otimes 1)\xi_n\Vert^2\notag \\
&=\sum_{i=0}^I\int_\2\Vert P_i(t)((q_{e_0}\otimes \Psi)\xi_n)(t)\Vert^2d\nu(t)\notag\\
&=\int_\2(\sum_{i=0}^IP_i(t)^2)\Vert ((q_{e_0}\otimes \Psi)\xi_n)(t)\Vert^2d\nu(t)\notag\\
&\geqslant\frac{2}{\varepsilon}\Vert (q_{e_0}\otimes \Psi)\xi_n\Vert^2=\frac{2}{\varepsilon}\Vert (q_{e_0}\otimes 1)\xi_n\Vert^2,
\end{align}
where $\Psi$ is the unitary transformation defined in section \ref{section_shift}.

By \ref{equa_qe0Pi-qei}, there exists an element of the ultrafilter $E\in\omega$ such that for any $n\in E$ and $i\in\{0,\cdots,I\}$ we have
\begin{equation}\label{equa_Umaxinj_qeo_J}
    \Vert ((q_{e_0}\otimes P_i(\sh))-(v_i\otimes 1))\xi_n\Vert<\frac{1}{4}.
\end{equation}
By Pythagoras theorem and the inequalities \ref{equa_align_qeixin}, \ref{equa_sumqeoPi} and \ref{equa_Umaxinj_qeo_J} we have
\begin{align*}
1&\geqslant\Vert \xi_n\Vert^2=\sum_{i\geqslant 0}\Vert (q_{e_i}\otimes 1)\xi_n\Vert^2\\
&\geqslant \sum_{i=0}^I\Vert (q_{e_i}\otimes 1)\xi_n\Vert^2=\sum_{i=0}^I\Vert (v_i\otimes 1)\xi_n\Vert^2 \\
&\geqslant \sum_{i=0}^I \Vert (q_{e_0}\otimes P_i(\sh))\xi_n\Vert^2-(I+1) (\frac {1} { 4^2}+ 2 \frac {1} {4})\\
&\geqslant \frac{2(I+1)}{\varepsilon}\Vert (q_{e_0}\otimes 1)\xi_n\Vert-(I+1).
\end{align*}
This implies
$$\Vert (q_{e_0}\otimes 1)\xi_n\Vert\leqslant \varepsilon,\ \text{for all}\ n\in E.$$
We have proved that
$$\limo \Vert(q_{e_0}\otimes 1)\xi_n\Vert_2=0.$$
Therefore, $\limo \Vert Q_J(x_n)\Vert=0$ which implies that $x$ is orthogonal to $Z_{J-1}^\omega$. 
The equality \ref{equa_zaperpby} implies that $xb\perp bx$.
Thus, the cup subalgebra $A\subset M$ has the AOP.
By Corollary \ref{cor_Umaxinj_cup_sing_MASA}, $\AM$ is a singular MASA.
Hence, by Theorem \ref{theo_AOP_donne_maxinj}, the cup subalgebra is maximal amenable.

\bibliographystyle{alpha}

\end{document}